\newcommand{\Fo}{\mathcal{F}_{m}}
\newcommand{\C}{{\mathbb C}}
\newtheorem{Corollary}{Corollary}
\newtheorem{Theorem}{Theorem}
\newtheorem{Proposition}{Proposition}
\title[Intermediate Hankel operators on the Fock space]{Intermediate Hankel operators on the Fock space}
\author{Olivia Constantin}
\date{}
\thanks{The author was supported in part by the FWF project P30251-N35}
\address{ Olivia Constantin,
Faculty of Mathematics,
University of Vienna,
Oskar-Morgenstern-Platz 1, 
1090 Vienna, 
Austria} \email {olivia.constantin@univie.ac.at}
\begin{document}
\maketitle

\begin{abstract} 
We construct a natural sequence of  middle Hankel operators on the Fock space, i.e. operators which
are intermediate between the small and big Hankel operators. These operators are connected with 
the minimal $L^2$-norm solution operator to $\bar\partial^N$ as well as to the polyanalytic Fock spaces.
\end{abstract}

{\small

\noindent
{\it Keywords:}  middle Hankel operators; Fock spaces

\noindent
{\it AMS Mathematics Subject Classification (2000)}:  	47B35, 32A37}

\section{Introduction}
We denote by $L^2(\mu_m)$ the space of square integrable functions in $\C$ with respect to the measure $d\mu_m(z):= \frac{1}{\pi}e^{-m|z|^2}\, dA(z)$, where $m>0$ and $A$ denotes the Lebesgue area measure in $\C$.
The classical Fock space $\Fo$ is given by
$$
\Fo=\Bigl\{f \hbox{ entire} :\ \int_\C |f(z)|^2\, d\mu_m (z)<\infty\Bigr\},
$$

For an entire function $g$  such that $z^n g \in L^2(\mu_m)$ ($n \in {\mathbb N}$), the big and small 
Hankel operators with symbol $\bar{g}$ are densely defined on $\Fo$ by
$$H_{\bar{g}}f=(I-P)(\bar{g}f)\,,\quad h_{\bar{g}}f=Q(\bar{g}f)\,,$$
where $f$ is an analytic polynomial, and $P$ and $Q$ are the orthogonal projections from 
$L^2(\mu_m)$ onto $\Fo$, respectively onto $\overline{\Fo^0}=\{\bar f :\ f \in \Fo,\ f(0)=0\}$. 

Given a closed subspace $Y$ of $L^2(\mu_m)$ such that $\overline{\Fo^0} \subset Y \subset (\Fo)^\perp$, 
we define the {\it middle}, or {\it intermediate}, Hankel operator with symbol $\bar g$ by
$$H_{\bar{g}}^Yf=P_Y(\bar{g}f)\,,$$
where $f$ is an analytic polynomial, and $P_Y$ is the orthogonal projection from 
$L^2(\mu_m)$ onto $Y$. Notice that, for polynomials $f$, the following holds
$$
\|h_{\bar{g}}f\|_{L^2(\mu_m)}\le \|H_{\bar{g}}^Yf\|_{L^2(\mu_m)}\le \|H_{\bar{g}}f\|_{L^2(\mu_m)}
$$
Middle Hankel operators on Bergman spaces were considered by 
Jansson and Rochberg \cite{rochberg1}, as well as by Peng, Rochberg and Wu \cite{rochberg}, where interesting 
properties regarding Schatten classes were studied. There is one notable difference between the 
behaviour of such operators on Bergman and Fock spaces. Namely, in the Bergman setting the big 
Hankel operator is bounded if and only if the little Hankel operator is bounded,  if and only if the complex 
conjugate of the symbol belongs to the Bloch space. This makes the characterisation of the 
boundedness of middle Hankel operators on Bergman spaces trivial. On the Fock space, however, there is a big gap 
between the class of symbols generating bounded big Hankel operators (which is the space of polynomials of 
degree one) and that  generating bounded little Hankel operators (which is the space of the entire functions $f$ 
such that $|f(z)| \lesssim e^{m|z|^2/4}$). 
This makes the study of middle Hankel operators on Fock spaces 
of interest. 
In this paper we construct in a natural way a sequence of middle Hankel operators on Fock spaces, 
$( H_{\bar{g}}^{Y_N})_{N \in {\mathbb N}}$, ``increasing" in the sense that $\overline{\Fo^0}\subset ...Y_{N+1}\subset Y_{N}...\subset Y_1=(\Fo) ^\perp$, and such that  $ H_{\bar{g}}^{Y_N}$ is bounded if and only if $g$ is a polynomial 
of degree less than or equal to $N$. The operators we consider are very much related to the minimal $L^2(\mu_m)$-norm 
solution operator to the equation 
\begin{equation}\label{dbar}
(\bar\partial)^N u=f
\end{equation}
where the above holds in the sense of distributions and  $f \in \Fo$. It is easy to see that, for any polynomial $f$, the function 
$\frac{{\bar z}^N}{N!}\,f$ solves \eqref{dbar} and hence the solution of minimal norm is given by
\begin{equation}\label{sdbar}
u=(I-P_{F^{N,m}})(\tfrac{{\bar z}^N}{N!}\,f)\,,
\end{equation}
where $F^{N,m}$ is the kernel of $(\bar\partial)^N$ in $L^2(\mu_m)$, that is, the Fock space of polyanalytic functions of 
order $N$, equivalently
\begin{equation}\label{pf}
F^{N,m}=\overline{\text{span}\{\bar z^j z^k:\ 0 \le j \le N-1\,,\ k \in {\mathbb N}\}}^{L^2(\mu_m)}\,.
\end{equation}
Obviously $F^{1,m}=\Fo$. 
Polyanalytic Fock spaces and kernels appear naturally in time-frequency analysis as well as in the mathematical analysis of Landau levels
\cite{groechenig,feichtinger,hedenmalm,vasilevski}.
The connections between big Hankel operators and the minimal norm solution operator to $\bar\partial$ (i.e. the case $N=1$)
were previously exploited in \cite{rochberg2,haslinger,ortega}.

Inspired by \eqref{sdbar}, we introduce the following operator
$$\tilde H_{\bar{g}}^Nf=(I-P_{F^{N,m}})({\bar g} f)\,,$$
where $f$ is an analytic polynomial. 
For $N=1$ we recover the big Hankel operator with symbol $\bar g$, which was studied in \cite{bommier} in several complex variables. The case $N=2$ was treated in \cite{schneider} for monomial symbols.
Regarding the boundedness and compactness of $\tilde H^N_{\bar g}$  we prove the following

\begin{Theorem}\label{boundedness}  Let $g$ be an entire function such that $z^n g \in L^2(\mu_m)$  for all $n \in {\mathbb N}$. Then
 $\tilde H^N_{\bar g}$ extends to a bounded operator on $\Fo$ if and only if $g$ is a polynomial
of degree at most $N$. Moreover, $\tilde H^N_{\bar g}$ is compact if and only if  
$g$ is a polynomial
of degree strictly smaller than $N$, that is, if and only if $\tilde H^N_{\bar g}$  is the zero operator. 
\end{Theorem}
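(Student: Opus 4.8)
The plan is to diagonalise $\tilde H^N_{\bar g}$ by working in the complex Hermite (Itô) basis of $L^2(\mu_m)$, which is exactly adapted to the polyanalytic decomposition. First I would recall that $L^2(\mu_m)=\bigoplus_{q\ge0}\mathcal F^{(q)}$, where the $\mathcal F^{(q)}$ are the \emph{true} polyanalytic Fock spaces, $\mathcal F^{(0)}=\Fo$, and $F^{N,m}=\bigoplus_{q=0}^{N-1}\mathcal F^{(q)}$, so that $I-P_{F^{N,m}}=\sum_{q\ge N}P_{\mathcal F^{(q)}}$. Writing $H_{p,q}$ for the complex Hermite polynomial of holomorphic degree $p$ and antiholomorphic degree $q$, the family $\{H_{p,q}\}_{p,q\ge0}$ is orthogonal with $\|H_{p,q}\|^2=p!\,q!/m^{p+q+1}$ and $\mathcal F^{(q)}=\overline{\operatorname{span}}\{H_{p,q}:p\ge0\}$. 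The computational engine is the expansion
\[ z^k\bar z^{\,n}=\sum_{j=0}^{\min(k,n)}\binom{k}{j}\binom{n}{j}\frac{j!}{m^j}\,H_{k-j,\,n-j}, \]
from which one reads off $(I-P_{F^{N,m}})(\bar z^{\,n} z^k)=\sum_{j:\,n-j\ge N}\binom{k}{j}\binom{n}{j}\frac{j!}{m^j}\,H_{k-j,\,n-j}$.

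With $g=\sum_n a_n z^n$, this yields, after using orthogonality and the norm formula, the clean identity
\[ \frac{\|\tilde H^N_{\bar g}z^k\|^2}{\|z^k\|^2}=\sum_{q\ge N}\ \sum_{j=0}^{k}\,|a_{q+j}|^2\,\frac{k!}{(k-j)!}\,\frac{\big((q+j)!\big)^2}{(j!)^2\,q!}\,m^{-q-j}. \]
The crucial feature is the factor $k!/(k-j)!$, which is bounded in $k$ only for $j=0$ and grows like $k^j$ for $j\ge1$. Moreover, for a fixed target $H_{p,q}$ exactly one pair $(n,j)$ contributes (namely $j=k-p$, $n=q+k-p$), so there is no cancellation and every displayed summand is a genuine lower bound for the left-hand side.

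The sufficiency direction is then immediate: if $g$ is a polynomial of degree $\le N$, then $a_{q+j}=0$ whenever $q+j>N$, so every term with $j\ge1$ (which forces $q+j\ge N+1$) vanishes and only $j=0,\ q=N$ survives, giving the constant value $|a_N|^2 N!/m^N$ independent of $k$. Since $\tilde H^N_{\bar g}$ sends $z^k$ to $\bar a_N H_{k,N}$ and these images are orthogonal, it is $|a_N|\sqrt{N!/m^N}$ times a partial isometry and hence extends boundedly to all of $\Fo$. Conversely, if $a_n\ne0$ for some $n>N$, I would choose $q=N$ and $j=n-N\ge1$; the corresponding single nonnegative summand equals $|a_n|^2\,\frac{k!}{(k-(n-N))!}\,\frac{(n!)^2}{((n-N)!)^2 N!}\,m^{-n}\to\infty$ as $k\to\infty$, so $\tilde H^N_{\bar g}$ is unbounded. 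This proves that boundedness is equivalent to $\deg g\le N$.

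For compactness, if $\deg g\le N-1$ then $\bar g f\in F^{N,m}$ for every holomorphic $f$, so $\tilde H^N_{\bar g}=0$, which is trivially compact. If instead $\deg g=N$ (so $a_N\ne0$), the normalised monomials $z^k/\|z^k\|$ tend to $0$ weakly in $\Fo$, whereas $\|\tilde H^N_{\bar g}(z^k/\|z^k\|)\|=|a_N|\sqrt{N!/m^N}>0$ for all $k$; since a compact operator must send weakly null sequences to norm null ones, $\tilde H^N_{\bar g}$ cannot be compact. Hence compactness is equivalent to $\deg g\le N-1$, i.e.\ to $\tilde H^N_{\bar g}$ being the zero operator. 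The main obstacle is the first paragraph: setting up the complex Hermite basis with the correct normalisation and, above all, proving the monomial expansion and the resulting projection formula, together with the bookkeeping that each Hermite mode is hit by a single pair $(n,j)$. Once these are in place, both dichotomies fall out of the explicit norm identity.
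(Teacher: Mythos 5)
Your proposal is correct, and it takes a genuinely different route from the paper. The paper first computes $P_{F^{N,m}}(\bar z^s z^n)$ against the Laguerre-type orthonormal basis of $F^{N,m}$, evaluates the resulting integrals $I_{n,r,n-s}$ via a combinatorial identity from Gould's tables to get $\Vert \tilde H^N_{\bar z^s}e_n\Vert^2=\frac{s!}{m^s}\sum_{r=N}^{s}\binom{n}{s-r}\binom{s}{r}$ for monomial symbols only, and then deduces the general case by adapting Lemmas 5.3--5.5 of Bommier-Hato--Youssfi (a step the paper merely sketches). You instead use the orthogonal decomposition $L^2(\mu_m)=\bigoplus_{q\ge0}\mathcal F^{(q)}$ with $F^{N,m}=\bigoplus_{q<N}\mathcal F^{(q)}$ and expand $z^k\bar z^n$ in the complex Hermite basis, which yields a closed-form expression for $\Vert\tilde H^N_{\bar g}z^k\Vert^2/\Vert z^k\Vert^2$ for an \emph{arbitrary} admissible symbol in one stroke; the observation that each mode $H_{p,q}$ is hit by a unique pair $(n,j)$ replaces the paper's combinatorial identity, and the reduction to monomials becomes unnecessary. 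The two computations are consistent (your formula specialises exactly to the paper's when $g=z^s$), and indeed the paper's basis functions $z^iL_r^i(m|z|^2)$ are, up to normalisation, your $H_{i+r,r}$, so the approaches are cousins; what yours buys is a self-contained treatment of general symbols, at the price of having to establish the Hermite expansion of $z^k\bar z^n$ and the identification $F^{N,m}=\bigoplus_{q=0}^{N-1}\mathcal F^{(q)}$ (both standard, going back to It\^o and Vasilevski). Two small points you should make explicit: the interchange of $P_{F^{N,m}}$ with the series $\sum_n\bar a_n\bar z^nz^k$ is justified because the $\bar z^nz^k$ are pairwise orthogonal in $n$ and $z^kg\in L^2(\mu_m)$; and the images $\tilde H^N_{\bar g}e_k$ for distinct $k$ are orthogonal only when $\deg g\le N$ (for general $g$ the modes $H_{p,q}$ with fixed $p-q$ can be hit from several inputs), which is the only place you use that orthogonality, so the argument stands.
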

In particular, the above theorem shows that the minimal norm solution operator to $(\bar\partial)^N$ is (bounded but) not compact.

Since $\overline{\Fo^0}\not \subset (F^{N,m})^\perp$, the operator $\tilde H_{\bar{g}}^N$ is not a middle Hankel operator, however 
 it  differs  by a finite rank operator from the middle Hankel operator
 $$H_{\bar{g}}^{Y_N}(f)=(I-P_{S^{N,m}})({\bar g} f)\,,$$
where 
$$ 
S^{N,m}:= \overline{\text{span}\{\bar z^j z^k:\ 0 \le j \le N-1\,,\ k\ge j,\  k \in {\mathbb N}\}}^{L^2(\mu_m)},
$$
and $Y_N=(S^{N,m})^\perp$.
It is easy to see that 
$$\overline{\Fo^0} \subset\, ... \subset Y_{N+1}\subset Y_N\subset ...\,Y_1=
(\Fo)^\perp$$ and hence the operators $ H_{\bar{g}}^{Y_N}$ $(N\ge 1)$
are middle Hankel operators. Also, since $z^ng\in L^2(\mu_m)$ ($n\in\mathbb{N}$), the difference
$(H_{\bar{g}}^{Y_N} - \tilde H_{\bar{g}}^N)(f)=P_{span\{\bar z^j z^k:\ 0\le k< j,\ 0 \le j \le N-1\}} (\bar g f) $ has finite rank.  In particular, it follows that $ H_{\bar{g}}^{Y_N}$ is bounded (compact) if and only if $\tilde H_{\bar{g}}^N$
is bounded (compact), and hence we get

\begin{Corollary}\label{} 
 Let $g$ be an entire function such that $z^n g \in L^2(\mu_m)$  for all $n \in {\mathbb N}$. Then
$H^{Y_N}_{\bar g}$ extends to a bounded operator on $\Fo$ if and only if $g$ is a polynomial
of degree smaller than or equal to $N$. Moreover, $H^{Y_N}_{\bar g}$ is compact if and only if $g$ is a polynomial
of degree smaller than $N$. 
\end{Corollary}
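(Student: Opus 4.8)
The plan is to reduce the Corollary directly to Theorem \ref{boundedness} by exploiting the finite-rank relationship between $H_{\bar g}^{Y_N}$ and $\tilde H_{\bar g}^N$ that is recorded just before the statement. The central point is that the two operators differ only by a projection onto a finite-dimensional space. Indeed, since $S^{N,m}\subset F^{N,m}$ with
$$
F^{N,m}\ominus S^{N,m}=\text{span}\{\bar z^j z^k:\ 0\le k<j\le N-1\}=:E,
$$
we have $P_{F^{N,m}}-P_{S^{N,m}}=P_E$. The number of admissible index pairs $(j,k)$ is $\binom{N}{2}$, so $\dim E<\infty$. Consequently, for every analytic polynomial $f$,
$$
(H_{\bar g}^{Y_N}-\tilde H_{\bar g}^N)(f)=P_E(\bar g f),
$$
and the standing hypothesis $z^n g\in L^2(\mu_m)$ for all $n\in{\mathbb N}$ guarantees that $\bar g f\in L^2(\mu_m)$, so that $P_E(\bar g f)$ is well defined on polynomials and extends to a bounded operator of rank at most $\dim E$.

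Once this finite-rank difference $D:=H_{\bar g}^{Y_N}-\tilde H_{\bar g}^N$ is in hand, the remainder is a perturbation argument. Writing $H_{\bar g}^{Y_N}=\tilde H_{\bar g}^N+D$, I would use that $D$, being of finite rank, is automatically both bounded and compact. Since the class of bounded (respectively compact) operators is stable under the addition of a finite-rank operator, it follows that $H_{\bar g}^{Y_N}$ extends to a bounded operator on $\Fo$ if and only if $\tilde H_{\bar g}^N$ does, and likewise that $H_{\bar g}^{Y_N}$ is compact if and only if $\tilde H_{\bar g}^N$ is compact.

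Finally I would invoke Theorem \ref{boundedness}: $\tilde H_{\bar g}^N$ is bounded precisely when $g$ is a polynomial of degree at most $N$, and compact precisely when $g$ is a polynomial of degree strictly smaller than $N$. Combining this with the two equivalences from the previous paragraph yields both assertions of the Corollary. I expect no genuine obstacle here, as the substantive content is entirely carried by Theorem \ref{boundedness}; the only point requiring care is the verification that $D$ really has finite rank, namely that $E$ is finite-dimensional and that $P_E(\bar g f)$ is well defined on polynomials under the assumption on $g$, after which everything reduces to standard functional-analytic perturbation theory.
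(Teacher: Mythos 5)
Your proposal is correct and follows essentially the same route as the paper, which likewise observes that $H_{\bar g}^{Y_N}-\tilde H_{\bar g}^N$ is the projection onto $\text{span}\{\bar z^j z^k:\ 0\le k<j\le N-1\}$ applied to $\bar g f$, hence of finite rank, and then invokes Theorem \ref{boundedness}. The only point stated a little too quickly (in the paper as well) is that ``finite rank'' alone does not give boundedness on a dense domain; one should note that each functional $f\mapsto\langle \bar g f,u\rangle=\langle f, g u\rangle$ with $u\in E$ is bounded because $z^n g\in L^2(\mu_m)$ forces $gu\in L^2(\mu_m)$.
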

\noindent Notice that, unlike $\tilde H_{\bar{g}}^N$, the operator $H^{Y_N}_{\bar g}$ is not identically zero if $g$ is a non-constant polynomial
of degree smaller than $N$.



\section{Proof of the main result}

Theorem \ref{boundedness} will follow from the characterization of the boundedness and compactness of $\tilde H^N_{\bar z^s}$ for $s\in\mathbb{N}$.
To this end, we begin by explicitly computing the projection on $F^{N,m}$ of the monomials $\bar z^s z^n$, for $s,n\in\mathbb{N}$.  
An orthonormal basis of $F^{N,m}$ (see e.g. \cite{hedenmalm}) is given by 
\begin{eqnarray*}
e_{i,r}^1(z)&:=& \sqrt{\tfrac{r!}{(r+i)!}}\,m^{(i+1)/2} z^i L_r^i(m|z|^2)\,,\qquad i \ge 0\,,\quad 0 \le r \le N-1\,,\\
e_{j,k}^2(z)&:=& \sqrt{\tfrac{j!}{(j+k)!}}\,m^{(k+1)/2} \bar{z}^k L_j^k(m|z|^2)\,,\qquad 0\le j \le N-k-1\,,\quad 1 \le k \le N-1\,,
\end{eqnarray*}
where $L_k^\alpha(x):= \sum_{i=0}^k (-1)^i {
k + \alpha \choose
k-i}
\,\frac{x^i}{i!}$
are the generalized Laguerre polynomials. For $N=1$ we obtain the standard orthonormal basis of $\Fo$:
$
e_n(z):=e_{n,0}^1(z)=\frac{m^{(n+1)/2}}{\sqrt{n!}} z^n,\quad n\ge 0.
$

\begin{Proposition}
For $s,\,n \in {\mathbb N}$ we have
$$P_{F^{N,m}}(\bar z^s z^n)= \left\{\begin{array}{l}
\displaystyle\sum_{r=0}^{N-1} \frac{r!}{(r+n-s)!}\,m^{-s} I_{n,r,(n-s)} L_r^{n-s}(m|z|^2) z^{n-s}\quad\text{if}\quad n \ge s\,,\\[0.67cm]
\displaystyle\sum_{j=0}^{N+n-s-1} \frac{j!}{(j+s-n)!}\,m^{-n} I_{s,j,(s-n)} L_j^{s-n}(m|z|^2) \bar{z}^{s-n}\quad\text{if}\quad s>n \ge s-N+1\,,\\[0.25cm]
0 \quad\text{otherwise}\,,
\end{array}\right.$$
where
$I_{a,b,c}=\int_0^\infty y^a L_b^c(y) e^{-y}\,dy\quad\text{for}\quad a,\,b,\, c \in {\mathbb N}.$
\end{Proposition}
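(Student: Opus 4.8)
The plan is to exploit the explicit orthonormal basis of $F^{N,m}$ recalled above, together with the elementary fact that, for any orthonormal basis $(e_\alpha)$ of a closed subspace, the orthogonal projection is given by $P_{F^{N,m}}f = \sum_\alpha \langle f, e_\alpha\rangle_{L^2(\mu_m)}\, e_\alpha$. Thus the whole computation reduces to evaluating the inner products $\langle \bar z^s z^n, e^1_{i,r}\rangle$ and $\langle \bar z^s z^n, e^2_{j,k}\rangle$ and recognising which of them survive and which vanish.

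First I would pass to polar coordinates $z = \rho e^{i\theta}$, so that $d\mu_m = \frac{1}{\pi} e^{-m\rho^2}\rho\, d\rho\, d\theta$, and use that each $L_k^\alpha$ has real coefficients, hence $\overline{L_k^\alpha(m|z|^2)} = L_k^\alpha(m|z|^2)$. For the first family, the integrand of $\langle \bar z^s z^n, e^1_{i,r}\rangle$ carries the angular factor $e^{i\theta(n-s-i)}$, whose integral over $[0,2\pi)$ vanishes unless $i = n-s$; since $i \ge 0$ this can occur only when $n \ge s$. For the second family, $\langle \bar z^s z^n, e^2_{j,k}\rangle$ carries the factor $e^{i\theta(n+k-s)}$, forcing $k = s-n$, which (because $1 \le k \le N-1$) is possible precisely when $s > n \ge s-N+1$. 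This angular selection is what collapses each double sum to a single sum over $r$ (respectively over $j$), and it also explains the third case: if $s-n \ge N$ then no admissible index $k$ exists and the projection is $0$.

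Once the surviving index is fixed, what remains in each case is a single radial integral. After the substitution $y = m\rho^2$ this becomes exactly $\int_0^\infty y^a L_b^c(y) e^{-y}\, dy = I_{a,b,c}$, with $(a,b,c) = (n, r, n-s)$ in the first case and $(a,b,c) = (s, j, s-n)$ in the second. Multiplying the resulting inner product by the corresponding basis element, the two normalising factors $\sqrt{r!/(r+n-s)!}$ (respectively $\sqrt{j!/(j+s-n)!}$) combine into the rational prefactor, while the half-integer powers of $m$ coming from the coefficient of $e$ and from its conjugate collapse to $m^{-s}$ (respectively $m^{-n}$), yielding the two displayed formulas. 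The summation limit $0 \le j \le N-k-1$ of the basis translates, under $k = s-n$, into $0 \le j \le N+n-s-1$, matching the statement.

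The computation is essentially routine; the only real content is the angular-orthogonality observation that selects a single power $z^{n-s}$ or $\bar z^{s-n}$ out of each family. The step demanding the most care is the bookkeeping of the powers of $m$ and of the factorial normalisations, together with a careful check that the index ranges $0 \le r \le N-1$ and $0 \le j \le N-k-1$ inherited from the basis are faithfully reproduced in the summation limits of the Proposition.
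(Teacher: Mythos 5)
Your proposal is correct and follows exactly the paper's own (very terse) proof: expand $P_{F^{N,m}}(\bar z^s z^n)$ in the orthonormal basis $\{e^1_{i,r}, e^2_{j,k}\}$ and evaluate the inner products in polar coordinates, where angular orthogonality selects $i=n-s$ or $k=s-n$ and the radial integral reduces to $I_{a,b,c}$ after substituting $y=m\rho^2$. You have simply supplied the bookkeeping details (index ranges, powers of $m$, normalisations) that the paper leaves to the reader, and they all check out.
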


{\it Proof.}
The result follows by using polar coordinates in
\begin{equation*}\label{1}
P_{F^{N,m}}(\bar{z}^s z^n) = \sum_{r=0}^{N-1}\sum_{i=0}^\infty \langle \bar{w}^s w^n, e_{i,r}^1\rangle e_{i,r}^1(z)
+ \displaystyle\sum_{k=1}^{N-1}\sum_{j=0}^{N-k-1} \langle \bar{w}^s w^n, e_{j,k}^2\rangle e_{j,k}^2(z)\,. \qquad\qquad\qquad\qquad\Box
\end{equation*}

\begin{Theorem}\label{boundedness1}
Let $s \in {\mathbb N}$. Then $\tilde{H}_{\bar{z}^s}^N$ extends to a bounded operator from ${\mathcal F}_{m}$ to $L^2(\mu_m)$ if and only of $s \le N$. Moreover, $\tilde{H}_{\bar{z}^s}^N$ is compact if and only if it is the zero operator.
\end{Theorem}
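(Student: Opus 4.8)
The plan is to diagonalize $\tilde H^N_{\bar z^s}$ against the standard orthonormal basis $(e_n)_{n\ge 0}$ of $\Fo$ and to reduce both boundedness and compactness to the growth of the sequence $\lambda_n := \|\tilde H^N_{\bar z^s} e_n\|_{L^2(\mu_m)}^2$. The key structural observation is that $F^{N,m}$ is invariant under the rotations $z \mapsto e^{i\theta}z$, so the orthogonal projection $P_{F^{N,m}}$ commutes with the induced unitary rotation operators on $L^2(\mu_m)$. Since $\bar z^s z^n$ is an eigenfunction of these rotations of weight $n-s$, so is $(I-P_{F^{N,m}})(\bar z^s z^n)$, and eigenfunctions of distinct weights are orthogonal. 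As $n\mapsto n-s$ is injective, the vectors $\tilde H^N_{\bar z^s} e_n = \frac{m^{(n+1)/2}}{\sqrt{n!}}(I-P_{F^{N,m}})(\bar z^s z^n)$ are pairwise orthogonal in $L^2(\mu_m)$. Hence for a polynomial $f=\sum_n c_n e_n$ one has $\|\tilde H^N_{\bar z^s} f\|^2 = \sum_n \lambda_n|c_n|^2$, so $\tilde H^N_{\bar z^s}$ extends to a bounded operator if and only if $\sup_n \lambda_n<\infty$, and is compact if and only if $\lambda_n\to 0$.

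Next I would compute $\lambda_n$ for all large $n$; the finitely many indices $n<s$ contribute only finite values of $\lambda_n$ and so affect neither the finiteness of the supremum nor the limit. For $n\ge s$ the Proposition expresses $P_{F^{N,m}}(\bar z^s z^n)$ through the orthonormal functions $e^1_{n-s,r}$, $0\le r\le N-1$ (the only basis elements of weight $n-s$), so by Parseval
\[
\|P_{F^{N,m}}(\bar z^s z^n)\|^2 = \sum_{r=0}^{N-1}\bigl|\langle \bar z^s z^n, e^1_{n-s,r}\rangle\bigr|^2 ,
\]
while a direct polar-coordinate computation gives $\|\bar z^s z^n\|^2 = (n+s)!/m^{n+s+1}$. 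Each inner product reduces, again in polar coordinates, to the Laguerre moment $I_{n,r,n-s}$, and evaluating this in closed form is the crux of the argument. Using the classical formula $\int_0^\infty y^a L_b^c(y)e^{-y}\,dy = a!\,\Gamma(c-a+b)/(b!\,\Gamma(c-a))$ with $c-a=-s$, the ratio of Gamma factors telescopes to $(-1)^r s!/(s-r)!$ for $0\le r\le s$ and vanishes for $r>s$, yielding the strikingly simple value $I_{n,r,n-s}=(-1)^r\binom{s}{r}n!$. I expect this evaluation, together with bookkeeping of the normalizing constants, to be the main technical obstacle.

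Substituting back gives
\[
\lambda_n = \frac{1}{m^s}\left[\frac{(n+s)!}{n!} - n!\sum_{r=0}^{\min(N-1,s)}\binom{s}{r}^2\frac{r!}{(n-s+r)!}\right].
\]
The final ingredient is the identity $\frac{(n+s)!}{n!}=n!\sum_{r=0}^{s}\binom{s}{r}^2\frac{r!}{(n-s+r)!}$, which I would establish by choosing $N$ large: then $s\le N-1$, so $\bar z^s z^n$ already lies in $F^{N,m}$, forcing $\|P_{F^{N,m}}(\bar z^s z^n)\|=\|\bar z^s z^n\|$ and hence the identity (valid for all $n$, as it does not involve $N$). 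With it the case analysis is immediate. If $s\le N-1$ the sum is complete and $\lambda_n\equiv 0$, so $\tilde H^N_{\bar z^s}$ is the zero operator. If $s=N$ exactly the term $r=s$ is missing, and since $n!\binom{s}{s}^2 s!/n!=s!$ we get $\lambda_n\equiv s!/m^s=N!/m^N$, a nonzero constant, so the operator is bounded but not compact. If $s>N$ the terms $r=N,\dots,s$ are missing, and the dominant one (at $r=N$) behaves like $\binom{s}{N}^2 N!\,n^{s-N}/m^s\to\infty$ since $s-N\ge 1$, so $\sup_n\lambda_n=\infty$ and the operator is unbounded. Combining the three cases shows that $\tilde H^N_{\bar z^s}$ is bounded precisely when $s\le N$, and is compact precisely when $\lambda_n\to 0$, that is, when $s\le N-1$, which is exactly when the operator vanishes.
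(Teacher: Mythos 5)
Your proposal is correct and follows essentially the same route as the paper: pairwise orthogonality of the vectors $\tilde H^N_{\bar z^s}e_n$, an explicit closed form for the Laguerre moments $I_{n,r,n-s}$, and a Vandermonde-type identity to separate the cases $s<N$, $s=N$, $s>N$ (constant vs.\ polynomially growing $\lambda_n$). The differences are only presentational: you justify orthogonality by rotation invariance where the paper does a polar-coordinate computation, you evaluate $I_{n,r,n-s}$ via the Gamma-function formula for Laguerre moments instead of Gould's identity $(3.47)$ --- your value $(-1)^r n!\binom{s}{r}$ agrees with the paper's $\tfrac{s!(r+n-s)!}{r!}(-1)^r\binom{n}{s-r}$ --- and you derive the needed Vandermonde identity by the ``take $N>s$'' trick instead of citing it.
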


\begin{proof}
We first notice that $\tilde{H}_{\bar{z}^s}^N=0$ for $s<N$ since in this case $\bar{z}^sf \in F^{N,m}$ for any analytic polynomial $f$. 
We therefore assume that $s \ge N$. Using Proposition 1, a simple calculation involving polar coordinates shows that for 
any $p,\,n \in {\mathbb N}$ with $p \neq n$ we have
$
\langle \tilde{H}_{\bar{z}^s}^N e_p,\,\tilde{H}_{\bar{z}^s}^N e_n \rangle
=0\,.$
Let us now evaluate
\begin{eqnarray}
\langle \tilde{H}_{\bar{z}^s}^N e_n,\,\tilde{H}_{\bar{z}^s}^N e_n \rangle &=& \frac{m^{n+1}}{n!}\,
\Big\{\langle \bar{z}^s z^n,\bar{z}^s z^n \rangle  - \langle \bar{z}^s z^n,\,P_{F^{N,m}}(\bar{z}^s z^n) \rangle\Big\} \nonumber \\
&=& \frac{(n+s)!}{n!m^s} - \frac{m^{n+1}}{n!}\,\langle \bar{z}^s z^n,\,P_{F^{N,m}}(\bar{z}^s z^n) \rangle \,.\label{2}
\end{eqnarray}
In view of Proposition 1 we have
\begin{eqnarray}
\frac{m^{n+1}}{n!}\,\langle \bar{z}^s z^n,\,P_{F^{N,m}}(\bar{z}^s z^n) \rangle &=& 
\frac{m^{n-s+1}}{n!}\,\sum_{r=0}^{N-1} \frac{r!}{(r+n-s)!}\,I_{n,r,n-s}\, \langle \bar{z}^s z^n,\,L_r^{n-s}(m|z|^2) z^{n-s} \rangle \nonumber\\
&=& \frac{m^{-s}}{n!} \,\sum_{r=0}^{N-1} \frac{r!}{(r+n-s)!}\,(I_{n,r,n-s})^2 \,.\label{3}
\end{eqnarray}
Now
\begin{eqnarray*}
I_{n,r,n-s} &=& \int_0^\infty y^n L_r^{n-s}(y) \,e^{-y} dy  = \sum_{i=0}^r \frac{(-1)^i}{i!}\,\begin{pmatrix}
r+n-s \\
r-i
\end{pmatrix} \int_0^\infty y^{n+i}e^{-y}\,dy \\
&=&  \sum_{i=0}^r (-1)^i\,\begin{pmatrix}
r+n-s \\
r-i
\end{pmatrix}\frac{(n+i)!}{i!} = \frac{s! (r+n-s)!}{r!} \, \sum_{i=0}^r (-1)^i \begin{pmatrix}
r \\
i
\end{pmatrix} \begin{pmatrix}
n+i \\
s
\end{pmatrix} \\
&=& \frac{s! (r+n-s)!}{r!} \,  (-1)^r \begin{pmatrix}
n \\
s-r
\end{pmatrix} 
\end{eqnarray*}
by the combinatorial identity (see relation $(3.47)$ on page 27 in \cite{gould})
$$
\sum_{i=0}^r (-1)^i \begin{pmatrix}
r \\
i
\end{pmatrix} \begin{pmatrix}
n+i \\
s
\end{pmatrix}=(-1)^r \begin{pmatrix}
n \\
s-r
\end{pmatrix}. 
$$ 
Replacing $I_{n,r,n-s}$ by the above expression in relation \eqref{3} yields
\begin{eqnarray*}
\frac{m^{n+1}}{n!}\,\langle \bar{z}^s z^n,\,P_{F^{N,m}}(\bar{z}^s z^n) \rangle &=& \frac{1}{n!m^s} \, \sum_{r=0}^{N-1} 
\frac{r!}{(r+n-s)!}\,\Big[\frac{(r+n-s)!}{r!} \Big]^2 (s!)^2 \,\Big[\begin{pmatrix}
n \\
s-r
\end{pmatrix}\Big]^2 \\
&=& \frac{s!}{m^s}\,\sum_{r=0}^{N-1} \begin{pmatrix}
n \\
s-r
\end{pmatrix} \begin{pmatrix}
s \\
r
\end{pmatrix}\,.
\end{eqnarray*}
Returning to \eqref{2} we now deduce that 
$$ \Vert \tilde{H}_{\bar{z}^s}^N e_n \Vert^2 = \frac{s!}{m^s}\,\Big\{ \frac{(n+s)!}{n!s!} - \sum_{r=0}^{N-1} \begin{pmatrix}
n \\
s-r
\end{pmatrix} \begin{pmatrix}
s \\
r
\end{pmatrix}\Big\}\,.$$
In view of the combinatorial identity
$\sum\limits_{r=0}^{s} 
{n\choose 
s-r}
{s \choose
r}
={n+s\choose 
s}
$,
for $s=N$ we get
$$ \Vert \tilde{H}_{\bar{z}^s}^N e_n \Vert^2 =\frac{s!}{m^s} \quad\text{for any}\quad n \ge s\,,$$
so that $\tilde{H}_{\bar{z}^s}^N$ is bounded. If $s>N$ we obtain
\begin{eqnarray*}
\Vert \tilde{H}_{\bar{z}^s}^N e_n \Vert^2 &=& \frac{s!}{m^s} \,\Big\{ \begin{pmatrix}
n+s \\
s
\end{pmatrix} - \begin{pmatrix}
n+s \\
s
\end{pmatrix} + \sum_{r=N}^{s} \begin{pmatrix}
n \\
s-r
\end{pmatrix} \begin{pmatrix}
s \\
r
\end{pmatrix} \Big\} 
= \frac{s!}{m^s} \, \sum_{r=N}^{s} 
{n \choose
s-r} {s \choose
r
}
\end{eqnarray*}
which is a polynomial of degree $s-N$ in `n'. Thus $\tilde{H}_{\bar{z}^s}^N$ is unbounded for $s>N$. 
\end{proof}

{\it Proof of Theorem \ref{boundedness}.} The result for general symbols now follows from Theorem \ref{boundedness1} by an adaptation of the approach in Lemmas 5.3-5 from \cite{bommier}  to our context. \hfill $\Box$
\medskip


\end{document}